\newtheorem{teo}{Theorem}[section]
\newtheorem{cor}{Corollary}[section]
\newtheorem{prop}{Proposition}[section]
\numberwithin{equation}{section}
\newtheorem{lem}{Lemma}[section]
\theoremstyle{definition}
\newtheorem{defn}{Definition}[section]
\theoremstyle{remark}
\newtheorem{remark}{Remark}[section]
\newtheorem{ex}{Example}[section]
\newcommand{\Z}{\mathbb{Z}}
\newcommand{\Q}{\mathbb{Q}}
\newcommand{\R}{\mathbb{R}}
\newcommand{\G}{\mathcal{G}}
\newcommand{\sgn}{\operatorname{sgn}}
\newcommand{\pff}{\operatorname{pff}}
\newcommand{\p}{\mathcal{P}}
\newcommand{\inv}{^{-1}}
\newcommand{\Bo}{A_{\overrightarrow{G}}}
\title{A determinantal formula for cluster variables in cluster algebras from surfaces }
\author{Javier De Loera Chávez}
\date{}
\begin{document}

\maketitle

\begin{abstract}
    For cluster algebras of surface type, Musiker, Schiffler and Williams gave a formula for cluster variables in terms of perfect matchings of snake graphs. Building on this, we provide a simple determinantal formula for cluster variables via the weighted biadjacency matrix of the associated snake graphs, thus circumventing the enumeration of their perfect matchings.  
\end{abstract}

\section{Introduction}

Cluster algebras are a broad class of commutative algebras introduced by Fomin and Zelevinsky in \cite{FZ1} as a combinatorial framework for the study of total positivity and canonical bases. They are recursively defined from an initial set of data by a combinatorial process called mutation. In \cite{FST}, Fomin, Shapiro, and Thurston introduced and studied a class of cluster algebras whose combinatorics are governed by flips of tagged triangulations in surfaces with marked points. In \cite{MSW}, Musiker, Schiffler and Williams gave cluster expansion formulae for the cluster variables of these cluster algebras in terms of the perfect matchings of certain planar graphs called \textit{snake graphs}. These graphs were studied further in \cite{CS}, where a remarkable connection to continued fractions was developed. 

In this note, we use a classical idea of matching theory to give a reformulation of Musiker's, Schiffler's, and Williams' cluster expansion formula into a determinantal formula for the cluster variables associated to plain arcs (see Proposition \ref{mio2}). This gives a simple way to avoid the enumeration of the perfect matchings of the corresponding snake graph. The formula works by using an algebraic device to count all of the perfect matchings of a graph. The formulae for cluster variables associated to tagged arcs in \cite{MSW} are given in terms of more restrictive matchings, simplified in \cite{wilson}, were the expansion formulae involve \emph{good matchings} of so-called \emph{loop graphs}. Thus the scope of our strategy is limited to the case of plain arcs. 

We begin by briefly recalling the definition of surface cluster algebras following \cite{FST}. Let $\Sigma$ be a Riemann surface with boundary $\partial \Sigma$, and let $\mathbb{M} \subset \Sigma $ be a non-empty finite set of \textit{marked points} such that $\mathbb{M} \cap C \neq \emptyset$ for each connected component $C$ of $\partial \Sigma$. The pair $(\Sigma, \mathbb{M})$ is a \textit{bordered surface}, the marked points are called $ \partial \Sigma$ \textit{ideal points}, and those in the interior of $\Sigma$ \textit{punctures}.
A \textit{boundary arc} of $\Sigma$ is a curve with endpoints in $M$ that is contained in $\delta \Sigma$, and an \textit{arc} in $\Sigma$ is a curve $\gamma$ with endpoints in $\mathbb{M}$ such that its relative interior is disjoint from $M$, it has no self-intersections, and it is not homotopic to a boundary arc or to a point. All such arcs are considered up to isotopy. 
Two arcs are said to be \textit{compatible} if they do not intersect in their relative interiors, up to homotopy. A maximal collection $T$ of distinct pairwise compatible arcs in $\Sigma$ along with all the boundary arcs of $\Sigma$ is an \textit{ideal triangulation} of $(\Sigma, \mathbb{M})$. All ideal triangulations of $(\Sigma, \mathbb{M})$ have $6g+3b+3p+c-6$ arcs. It may happen that not all triangles of an ideal triangulation $T$ have three sides: when a loop $l$ encircles a puncture $p$, there is exactly one arc $r$ joining the base of $l$ to $p$ that is compatible with $l$. The shape formed by $l$ and $r$ is called a \textit{self-folded triangle}, where $r$ is the self-folded side.  

Pick an initial ideal triangulation $T_0$ with arcs $\tau_1, \ldots, \tau_n$. For each triangle $\Delta$ in $T_0$, let $B^\Delta = (b_{i,j}^{\Delta})_{1\leq i  \leq n, 1\leq j\leq n}$, where
\[
b_{i,j}^\Delta = \begin{cases}
        1 & \text{if $t_j$  follows $t_i$ in clockwise order,}\\
        -1 & \text{if $t_j$  follows $t_i$ in counterclockwise order,}\\
        0 & \text{otherwise}.
\end{cases}
\]
The \textit{signed adjacency matrix} $B=B(T_0)$ is given by $B = \sum_\Delta B^\Delta$, where the sum is taken over all triangles of $T_0$ that are not self-folded. 

Consider the \emph{tropical semifield} $\mathbb{P}:=\operatorname{Trop}(y_1,\ldots, y_n)$ defined as the multiplicative abelian group freely generated by $y_1, \ldots, y_n$ with the \emph{auxiliary sum} 
\[
    \prod_{i=1}^n y_i^{a_i} \oplus \prod_{i=1}^n y_i^{b_i} := \prod_{i=1}^n y_i^{\min (a_i,b_i)}\,.
\]

Take a tuple of $\textbf{x}_0 = (x_1, \ldots, x_n)$ of algebraically independent generators of the field of rational functions $\Q\mathbb{P}(u_1, \ldots, u_n)$. Such a tuple is called a \emph {cluster}.
Let $\mathbf{y}_0\in \mathbb{P}^n$. The triple $(\textbf{x}_0, \textbf{y}_0,B)$ is called the \textit{initial seed}. For each $k\in \{1,\ldots, n\}$ define the \textit{mutation} in direction k $\mu_k(\textbf{x}_0,\textbf{y}_0, B_0) := (\mathbf{x}', \mathbf{y}', B')$ by the following rule: 

\begin{itemize}
    \item The matrix $B' = (b'_{ij})$ is defined by\[
b'_{i,j} = \begin{cases}
        -b_{i,j} & \text{if } k\in\{i,j\} \,,\\
        b_{i,j} + \frac{|b_{i,k}|b_{k,j} + b_{i,k}|b_{k,j}|}{2} & \text{otherwise}\,.
    \end{cases}
\]
    \item The cluster $\mathbf{x}' = (x_1',\ldots, x_n')$ is given by $x_{i}'=x_{i}$ if $i\neq k$, and
    \[
    x_{k}' = 
        \frac{1}{(y_k\oplus1)x_{k}}\left(y_k\prod_{B_{k,j}>0} x_{j}^{B_{k,j}} + \prod_{B_{k,j}<0} x_{j}^{-B_{k,j}}\right)\,.\]

    \item The coefficient tuple $\mathbf{y}'$ is given by
    \[
    y_j'=\begin{cases}
        y_j\inv & i = k\,,\\
        y_j y_k^{\max (0,b_{kj})}(y_k \oplus1)^{-b_{kj}} & \text{otherwise}\,.
    \end{cases}
    \]
\end{itemize}

The matrix $B' = \mu_k(B(T_0))$ is in fact the adjacency matrix of the triangulation obtained by flipping the edge $\tau_k$, and the set $\mu_k(\textbf{x}_0)$ is again a cluster.
Let $\mathcal{X}$ be the set of \textit{cluster variables}: this is, all variables contained in all possible clusters obtained by sequences of mutations starting from the initial seed $\textbf{x}_0$. The \textit{cluster algebra} with principal coefficients associated to the surface with marked points $(\Sigma, \mathbb{M})$ is the algebra $\mathcal{A} := \mathbb{Q}\mathbb{P}[\mathcal{X}]$. It is independent of the chosen triangulation $T_0$, so it really is associated to the surface $(\Sigma, \mathbb{M})$. The cluster algebra $\mathcal{A}$ has principal coefficients (at $t_0$) when $\mathbf{y}_0 = (y_1, \ldots, y_n)$, and this is denoted by $\mathcal{A}_\bullet(T_0)$. 

There is a bijection between clusters of $\mathcal{A}$ and \textit{tagged} triangulations of $(\Sigma, \mathbb{M})$, as well as between cluster variables and tagged arcs in $\Sigma$. With this in mind, we will also write $y_{\tau_i}$ for $y_i$. The tagging of arcs introduced in \cite{FST} allows to fully model the mutation of all cluster variables associated to any initial tagged triangulation, since the self-folded side of a self-folded triangle cannot be flipped in the usual way. 

\section{Perfect matchings}

We will consider only finite and loopless graphs. Let $\G$ be a graph with vertex set $V=\{v_1, \ldots, v_n\}$ and edge set $E$. Denote by $E(i,j)$ the set of edges between $v_i$ and $v_j$. Write $s(G)$ for the underlying simple graph of $G$, and $m(i,j)=|E(i,j)|$. Of course, if $G$ is simple then $s(G) = G$ and $m(i,j) \in \{0,1\}$ for all $1\leq i,j\leq n$. If $\overrightarrow{G}$ is an orientation of $G$, then $E(i,j)$ will also denote the set of arrows with tail $v_i$ and head $v_j$.

A \textit{perfect matching} of $G$ is a subset $M\subseteq E$ such that each $v\in V$ is an endpoint of exactly one member of $M$. 
Thus, $|M| = |V|/2$, so a graph that admits a perfect has an even number of vertices. Call such a graph \textit{even}. Denote by $\operatorname{Match}(G)$ the set of perfect matchings of $G$, and let $\Phi(G) := |\operatorname{Match}(G)|.$ We will use a simple but useful device to keep track of all perfect matchings of $G$, the so-called \textit{method of variables}, as in \cite{LP}. 

\begin{defn}(Weighted adjacency matrix)\label{wam}
    Let $G$ be a graph with vertex set $V=\{v_1, \ldots, v_n\}$. For each edge $e$ of $s(G)$, consider the formal variable $z_e$, and let $a_{i,j} := m(i,j) z_e$.
    The matrix $A_G (z) := (a_{i,j})$ is called the \textit{weighted adjacency matrix} of $G$. Furthermore, if $\overrightarrow{G}$ is an orientation of $G$ with no oriented 2-cycles, write
    \[
    a'_{i,j} := \epsilon(i,j) m(i,j) z_e \,, \quad \text{where} \quad \epsilon(i,j) :=\begin{cases}
    1 & E(i,j) \neq \emptyset \,,\\
    -1 & E(j,i) \neq \emptyset \,, \\
    0 & \text{ otherwise}\,.\
    \end{cases} 
    \]
    $A_{\overrightarrow{G}}(z) := (a'_{i,j})$ is the \textit{weighted skew-symmetric adjacency matrix} of $\overrightarrow{G}$. 
\end{defn}

\begin{remark} 
The matrices of definition \ref{wam} are defined up to a relabeling of the vertex set of $G$, and thus a simultaneous permutation of rows and columns. By setting $z_e = 1$, we recover the usual adjacency matrices of $G$ and $\overrightarrow{G}$, so we denote them by $A_G(1)$ and $A_{\overrightarrow{G}}(1)$, respectively. 
\end{remark}

\begin{defn}
    Let $G$ be a graph and $E'$ be the edge set of $s(G)$. The \textit{matching generating function} of $G$ is the polynomial $\phi_G \in \Z[z_e \mid e\in E']$ given by
    \[
        \phi_{G} := \sum_{\{e_1, \ldots, e_{k}\} \in \operatorname{Match}(G)} z_{e_1}\cdots z_{e_{k}}  \, .
    \]
\end{defn}

\begin{defn} \label{pfaff}
    Let $C = (c_{i,j})$ be a skew-symmetric matrix of size $2n\geq 2$ over a commutative unit ring $A$. For each partition in pairs $P = \{\{i_1, j_1\}, \ldots, \{i_n, j_n\}\}$ of the set $\{1, 2, \ldots, 2n-1, 2n\}$, define 
    \[
        \sigma_P := \begin{pmatrix}
            1 & 2 & \ldots & 2n-1 & 2n \\
            i_1 & j_1 & \ldots & i_n & j_n
           \end{pmatrix} \, ,\quad c_P :=  \operatorname{sgn}(\sigma_P) c_{i_1,j_1}\cdots c_{i_n, j_n} \,,
    \]
    where $\operatorname{sgn}(\sigma_P)$ denotes the sign of the permutation $\sigma$.
    The \textit{pfaffian} of $C$ is $\operatorname{pff}(C) := \sum_{P} c_P \, . $
\end{defn}

\begin{remark}\label{independent}
    The term $c_P$ is independent of the order of labeling of each part in $P$. Indeed, switching the roles of $i_k$ and $j_k$ changes the sign of $\sigma_P$ by $-1$, but $c_{j_k,i_k} = -c_{i_k, j_k}$. Thus, $c_P$ depends only on the partition $P$. Note that such a partition corresponds to a perfect matching of $K_{2n}$, the complete graph in $2n$ vertices, and that $c_P \neq 0$ if and only if such a potential matching corresponds to an actual matching of $G$.
\end{remark} 
The pfaffian and determinant of a skew-symmetric matrix are closely related. The following lemma is due to Cayley \cite{C}. A proof can be found in \cite{A}, where it appears as Theorem 5.6.
\begin{lem}\label{cay}
    If $B$ is a skew-symmetric matrix of even size, $\det B = \operatorname{pff}(B)^2$.
\end{lem}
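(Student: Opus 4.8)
The plan is to deduce the identity from two ingredients: a reduction to a universal (generic) matrix, and a transformation law for the pfaffian under congruence, proved through the exterior algebra.

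First I would reduce to the generic case. By the Leibniz formula $\det B = \sum_{\sigma} \operatorname{sgn}(\sigma)\prod_i b_{i,\sigma(i)}$ and by Definition \ref{pfaff}, both $\det B$ and $\operatorname{pff}(B)^2$ are polynomials with integer coefficients in the entries $b_{ij}$ with $i<j$ (the remaining entries being $-b_{ji}$ or $0$). Hence it suffices to prove the identity for the \emph{generic} skew-symmetric matrix whose above-diagonal entries are independent indeterminates over $\mathbb{Z}$: once it holds there, applying the unique ring homomorphism $\mathbb{Z}[b_{ij}] \to A$ that sends each indeterminate to the corresponding entry yields the statement over an arbitrary commutative unit ring $A$. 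We may therefore work over the field $K = \mathbb{Q}(b_{ij})$.

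Next I would prove the transformation law $\operatorname{pff}(M^{\mathsf T} B M) = \det(M)\,\operatorname{pff}(B)$ for every matrix $M$, using exterior algebra. Set $\omega = \sum_{i<j} b_{ij}\, e_i \wedge e_j \in \bigwedge^2 K^{2n}$. Expanding $\omega^n$, a surviving term chooses $n$ pairwise disjoint pairs $\{i_k<j_k\}$ — that is, a partition $P$ as in Definition \ref{pfaff} — and reordering the wedge into standard position produces exactly the sign $\operatorname{sgn}(\sigma_P)$; collecting the $n!$ orderings of the factors (which, being even-degree forms, commute) gives $\tfrac{1}{n!}\omega^n = \operatorname{pff}(B)\, e_1\wedge\cdots\wedge e_{2n}$. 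Now put $\tilde e_i = \sum_k M_{ik} e_k$. A direct computation using skew-symmetry shows that the $2$-form attached to $C = M^{\mathsf T} B M$ is $\sum_{i<j} b_{ij}\, \tilde e_i \wedge \tilde e_j$, i.e. the image of $\omega$ under the algebra endomorphism of $\bigwedge K^{2n}$ induced by $e_i \mapsto \tilde e_i$. Applying this endomorphism to $\tfrac1{n!}\omega^n$ and using $\tilde e_1 \wedge\cdots\wedge \tilde e_{2n} = \det(M)\, e_1\wedge\cdots\wedge e_{2n}$ gives $\operatorname{pff}(C) = \det(M)\,\operatorname{pff}(B)$, as claimed.

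Finally I would conclude with the symplectic normal form. Because $\det B$ is not the zero polynomial — it specializes to $\det J = 1$, where $J$ is the block-diagonal matrix with blocks $\left(\begin{smallmatrix} 0 & 1 \\ -1 & 0\end{smallmatrix}\right)$ — the generic $B$ is invertible over $K$, so the alternating form it defines is nondegenerate and admits a symplectic basis; equivalently $B = M^{\mathsf T} J M$ for some invertible $M$. One checks immediately that $\det J = 1$ and $\operatorname{pff}(J) = 1$ (the identity pairing is the only nonzero term). Then multiplicativity of the determinant gives $\det B = \det(M)^2$, while the transformation law gives $\operatorname{pff}(B) = \det(M)\operatorname{pff}(J) = \det(M)$; squaring yields $\operatorname{pff}(B)^2 = \det(M)^2 = \det B$. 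The main obstacle is the transformation law, and within it the bookkeeping identifying the reordering signs in the exterior-algebra expansion with the permutation signs $\operatorname{sgn}(\sigma_P)$ of Definition \ref{pfaff} (Remark \ref{independent} guarantees these are well defined). The generic reduction must also be handled with care, since the target ring $A$ need not be a field nor of characteristic $0$; universality of $\mathbb{Z}[b_{ij}]$ is exactly what makes passing through $K$ legitimate.
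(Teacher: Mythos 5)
Your proof is correct, but note that the paper does not actually prove this lemma: it defers entirely to Aigner \cite{A}, Theorem 5.6, whose argument is the combinatorial one (expand $\operatorname{pff}(B)^2$ over pairs of partitions, superpose the two corresponding matchings of $K_{2n}$ into doubled edges and even cycles, and match the resulting signed terms with the Leibniz expansion of $\det B$ after cancelling the permutations with odd cycles by a sign-reversing involution). What you give instead is the classical algebraic proof: reduction to the generic alternating matrix over $\mathbb{Z}$, the exterior-algebra identity $\tfrac{1}{n!}\omega^n=\operatorname{pff}(B)\,e_1\wedge\cdots\wedge e_{2n}$, the congruence law $\operatorname{pff}(M^{\mathsf T}BM)=\det(M)\operatorname{pff}(B)$, and the symplectic normal form over $\mathbb{Q}(b_{ij})$. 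All steps check out, including the two delicate points: the identification of the reordering signs with $\operatorname{sgn}(\sigma_P)$ from Definition \ref{pfaff} (each partition occurring $n!$ times because $2$-forms commute), and the passage from the field $K$ back to an arbitrary commutative unit ring via universality of $\mathbb{Z}[b_{ij}]$ --- the latter is genuinely needed, since the symplectic-basis argument is unavailable over a general ring. You also correctly build in that ``skew-symmetric'' here means alternating (zero diagonal); without that the statement fails in characteristic $2$. The trade-off between the two routes: the combinatorial proof works directly over any ring and stays close to the matching-theoretic spirit of the paper, while yours is shorter modulo standard multilinear algebra and yields the transformation law $\operatorname{pff}(M^{\mathsf T}BM)=\det(M)\operatorname{pff}(B)$ as a reusable byproduct.
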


The next lemma gives a way to keep track of the perfect matchings of a graph $G$.

\begin{lem}
Let $G$ be an even graph, and let $\overrightarrow{G}$ be an arbitrary orientation of $G$. The sum of the absolute value of the coefficients of $\operatorname{pff}(A_{\overrightarrow{G}}(z))$ is $\Phi(G)$. 
\end{lem}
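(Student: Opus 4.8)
The plan is to expand the pfaffian directly from Definition \ref{pfaff} and read off its coefficients, exploiting the fact that taking absolute values erases all the sign information coming from the chosen orientation (this is exactly why an \emph{arbitrary} orientation suffices). Write $2n = |V|$. By Remark \ref{independent}, the terms $c_P$ of $\operatorname{pff}(B_{\overrightarrow{G}}(z))$ are indexed by partitions $P = \{\{i_1,j_1\}, \ldots, \{i_n,j_n\}\}$ of $\{1, \ldots, 2n\}$ into pairs, equivalently by perfect matchings of $K_{2n}$. Substituting the entries $b_{i,j} = \epsilon(i,j)\, m(i,j)\, z_e$ from Definition \ref{wam}, each term becomes
\[
c_P = \operatorname{sgn}(\sigma_P) \prod_{k=1}^n \epsilon(i_k, j_k)\, m(i_k, j_k)\, z_{e_k},
\]
where $e_k$ denotes the edge of $s(G)$ joining $v_{i_k}$ and $v_{j_k}$ when it exists.

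First I would determine which terms survive. Since $\epsilon(i_k,j_k)=0$ and $m(i_k,j_k)=0$ occur together, precisely when there is no edge between $v_{i_k}$ and $v_{j_k}$, the term $c_P$ is nonzero if and only if every pair of $P$ spans an edge of $s(G)$; that is, $P$ is the endpoint partition of a perfect matching $M$ of the simple graph $s(G)$. For such a $P$ the associated monomial is the squarefree product $z_{e_1}\cdots z_{e_n}$, with integer coefficient $\operatorname{sgn}(\sigma_P)\prod_k \epsilon(i_k,j_k)\, m(i_k,j_k)$; its absolute value is simply $\prod_k m(i_k,j_k)$, because each $\operatorname{sgn}(\sigma_P)$ and each $\epsilon(i_k,j_k)$ is $\pm 1$.

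Next I would rule out any merging of terms. Distinct perfect matchings of $s(G)$ are distinct edge sets, hence produce distinct squarefree monomials in the distinct indeterminates $z_e$, so each matching of $s(G)$ feeds exactly one monomial of the pfaffian and no two contributions can cancel or combine. Therefore the sum of the absolute values of the coefficients of $\operatorname{pff}(B_{\overrightarrow{G}}(z))$ equals
\[
\sum_{M \in \operatorname{Match}(s(G))} \prod_{e \in M} m(e),
\]
where $m(e)$ is the number of parallel edges of $G$ lying over $e\in s(G)$.

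The final step, which I regard as the crux, is the combinatorial identity $\sum_{M}\prod_{e\in M} m(e) = \Phi(G)$. I would establish it by a counting bijection: every perfect matching of $G$ projects to a perfect matching $M$ of $s(G)$, and conversely a perfect matching of $G$ lying over a fixed $M$ is exactly an independent choice, for each $e\in M$, of one of the $m(e)$ parallel copies of $e$. Hence the matchings of $G$ over $M$ number $\prod_{e\in M} m(e)$, and summing over $M$ yields $\Phi(G)$. The only delicate point is this multigraph bookkeeping, namely checking that the projection-to-$s(G)$ map genuinely partitions $\operatorname{Match}(G)$ and that $\prod_{e\in M} m(e)$ counts the parallel-edge choices without over- or under-counting; for simple $G$ the identity degenerates to the observation that each matching contributes a single monomial with coefficient $\pm 1$, so that the sum of absolute values is just the number of matchings.
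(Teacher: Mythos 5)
Your proposal is correct and follows essentially the same route as the paper: expand the pfaffian term by term, observe that $c_P$ survives exactly when $P$ is a perfect matching of $s(G)$ with $|c_P|=\prod_k m(i_k,j_k)$, note that distinct matchings give distinct squarefree monomials so nothing cancels, and count matchings of $G$ over each matching of $s(G)$ by the product of edge multiplicities. You are somewhat more explicit than the paper about the final multigraph bookkeeping, but the argument is the same.
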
 

\begin{proof}
    Take the vertex set $V$ of $G$ to be $\{1,\ldots, n\}$, and consider the simple graph $s(G)$ with edge set $E'$. Let $k=n/2$. Each partition $P = \{\{i_l, j_l\} \mid 1\leq l \leq k\}$ as in definition \ref{pfaff} corresponds to a possible set of independent edges in the simple graph $s(G)$. For $C=\Bo(z)$, the associated term $c_P$ is
    \[
        c_P = \begin{cases}
            0 & \text{if }\{i_s, j_s\}\notin E' \text{ for some } 1\leq s\leq k \, ,\\ 
            \prod_{l=1}^k \epsilon(i_l,j_l) m(i_l,j_l)z_{\{i_l, j_l\}} & \text{otherwise}.
        \end{cases}
    \]
    Thus, $c_P \neq 0$ if and only if $P$ corresponds to a perfect matching $M$ in $s(G)$, as $P$ is a partition. Note that the coefficient $\prod_{l=1}^k m(i_l,j_l)$ 
    is the number of ways of getting the matching $M$ in $s(G)$ from a matching in $G$.
    Since different matchings of $s(G)$ differ by at least one edge, terms corresponding to distinct matchings cannot cancel each other. This completes the proof.
\end{proof}
As observed by Tutte in \cite{T}, the previous argument and Lemma \ref{cay} imply that a graph $G$ admits a perfect matching if and only if there is some orientation $\overrightarrow{G}$ for which $\det (\Bo(z)) \neq 0$. Call the polynomial $\operatorname{pff}(\Bo(z))$ the \textit{pfaffian} of $\overrightarrow{G}$, and denote it by $\pff(\overrightarrow{G})$. Note that this polynomial is defined only up to a sign that depends on the chosen labeling of the vertices of $G$.

\begin{cor}
    Let $\overrightarrow{G}$ be an oriented graph. Then $
    |\operatorname{pff}(\Bo(1))| \leq \Phi(G)$.
\end{cor}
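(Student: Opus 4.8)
The plan is to read the result directly off the preceding lemma, which already expresses $\Phi(G)$ as the sum of the absolute values of the coefficients of $\pff(\Bo(z))$. The only genuine content is a triangle inequality, combined with the elementary observation that evaluating a polynomial at the all-ones point returns the sum of its coefficients.

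Concretely, I would first record the Pfaffian as a polynomial
\[
\pff(\Bo(z)) = \sum_{M} a_M \, z^M \, ,
\]
where $M$ ranges over the perfect matchings of the simple graph $s(G)$ and $z^M := \prod_{\{i,j\}\in M} z_{\{i,j\}}$ is the associated squarefree monomial; by the proof of the previous lemma, $a_M = \prod_l \epsilon(i_l,j_l) m(i_l,j_l)$ up to the sign of $\sigma_P$, and $\sum_M |a_M| = \Phi(G)$. Next I would specialize every variable $z_e$ to $1$. Since each monomial $z^M$ becomes $1$, and since $\pff$ is a fixed polynomial expression in the entries $b_{i,j}$ of $\Bo(z)$ (so that the specialization $z_e \mapsto 1$ commutes with forming the Pfaffian), this yields $\pff(\Bo(1)) = \sum_M a_M$.

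The conclusion is then immediate from the triangle inequality:
\[
|\pff(\Bo(1))| = \left| \sum_M a_M \right| \leq \sum_M |a_M| = \Phi(G) \, .
\]
I do not anticipate any real obstacle, as the statement is essentially the lemma read after collapsing the weights. The only point worth a remark is the equality case: $|\pff(\Bo(1))| = \Phi(G)$ holds precisely when all the nonzero coefficients $a_M$ carry a common sign, which is exactly the condition that $\overrightarrow{G}$ be a \emph{Pfaffian orientation} of $G$. This is the situation in which the Pfaffian counts the perfect matchings of $G$ exactly, and it is the phenomenon one ultimately wants to exploit for snake graphs.
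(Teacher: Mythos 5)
Your argument is correct and is exactly the intended one: the paper states this corollary without proof as an immediate consequence of the preceding lemma, and your specialization $z_e \mapsto 1$ followed by the triangle inequality is precisely that deduction. Your closing remark on the equality case also matches the paper's subsequent definition of a Pfaffian orientation.
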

An orientation $\overrightarrow{G}$ of $G$ for which the equality is attained in the previous corollary is a \textit{pfaffian} orientation. Likewise, a graph is \textit{pfaffian} if it admits a pfaffian orientation. For such an orientation, the pfaffian of $\overrightarrow{G}$ is a polynomial whose coefficients all have the same sign. Indeed, in such a case $\pff(\overrightarrow{G}) = \pm \phi_G$. We cite a useful characterization of pfaffian orientations. Call a cycle $C$ of a graph $G$ \textit{nice} if $G-V(C)$ admits a perfect matching. An oriented even cycle is \emph{oddly oriented} if it has an odd number of arrows that agree with any of its two opposite cyclic orientations.
\begin{teo}(\cite{LP}, Theorem 8.3.2)\label{car}
    Let $G$ be an even graph and $\overrightarrow{G}$ an orientation of $G$. Then $\overrightarrow{G}$ is pfaffian if and only if each even nice cycle of $G$ is oddly oriented.
\end{teo}
Furthermore, the following is classic. A proof can be found in \cite{LP}, Theorem 8.3.4.
\begin{teo}\label{kast}(Kasteleyn)
    Every even planar graph is pfaffian. 
\end{teo}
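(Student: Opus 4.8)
The plan is to exhibit one explicit orientation of $G$ and check that it satisfies the combinatorial criterion of Theorem~\ref{car}. We may assume $G$ is connected: its weighted skew-symmetric adjacency matrix is block-diagonal over the components, every cycle lives in a single component, and a perfect matching of $G-V(C)$ restricts to one of $G_0-V(C)$ for the component $G_0$ containing $C$, so it suffices to orient each component using its own planar embedding. Fix, then, a planar embedding of a connected even graph $G$. The target is a \emph{Kasteleyn orientation}: one in which, for every bounded face $f$, traversing $\partial f$ clockwise yields an odd number of arrows agreeing with the direction of traversal. Writing $a(\,\cdot\,)$ for the number of agreeing arrows along a clockwise boundary, the argument splits into (i) existence of such an orientation and (ii) the claim that this face condition forces $a(C)$ to be odd on every even nice cycle $C$, which is exactly the hypothesis of Theorem~\ref{car}.

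First I would construct the Kasteleyn orientation by induction on the number $b$ of bounded faces. If $b=0$ the graph is a forest and any orientation works vacuously. If $b\geq 1$, choose a bounded face $f$ sharing an edge $e$ with the unbounded face. Since $e$ lies on the cycle bounding $f$ it is not a bridge, so $G-e$ is connected and has $b-1$ bounded faces (the faces $f$ and the outer face merge). By induction $G-e$ carries a Kasteleyn orientation; restoring $e$ and orienting it so as to correct the parity of $a(f)$ completes the step, because among the bounded faces of $G$ the edge $e$ borders only $f$, leaving the parities of all other faces untouched.

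The heart of the matter is a parity identity for an arbitrary cycle $C$ traversed clockwise, in terms of the region $R$ it encloses. Summing $a(f)$ over all bounded faces $f\subseteq R$ and double-counting by edges: each edge strictly inside $R$ borders two interior faces that traverse it in opposite directions, contributing exactly $1$, while each edge of $C$ borders a single interior face which traverses it opposite to the clockwise direction of $C$, contributing $1-[\,e\text{ agrees with }C\,]$. Hence, with $E_{\mathrm{in}}$ the set of interior edges,
\[
\sum_{f\subseteq R} a(f) \;=\; |E_{\mathrm{in}}| + |C| - a(C).
\]
Because $G$ is connected, every interior vertex is joined to $C$ by a path that (by the Jordan curve property of $C$) stays inside $R$, so the subgraph $H$ formed by $C$ and its interior is connected; Euler's formula applied to $H$, with $q$ vertices strictly inside, $r$ bounded faces inside, and $|C|+|E_{\mathrm{in}}|$ edges, gives $|E_{\mathrm{in}}|=q+r-1$. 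Since the orientation is Kasteleyn, $\sum_{f\subseteq R} a(f)\equiv r \pmod 2$, and substituting yields
\[
a(C) \;\equiv\; |C| + q - 1 \pmod 2.
\]

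It remains to control $q$ for an even nice cycle. As $|C|$ is even, the identity reduces to $a(C)\equiv q+1 \pmod 2$, so I must show the number $q$ of vertices strictly inside $C$ is even. Here niceness enters: $G-V(C)$ has a perfect matching $M'$, and in the planar embedding $C$ is a Jordan curve, so no edge of $M'$ can join a vertex strictly inside $C$ to one strictly outside without crossing $C$. Thus $M'$ matches the inside vertices among themselves, forcing $q$ even and hence $a(C)$ odd; reversing the direction of traversal replaces $a(C)$ by $|C|-a(C)$, of the same parity, so the criterion of Theorem~\ref{car} holds for every even nice cycle and $G$ is pfaffian. The main obstacle I anticipate is getting the double-counting bookkeeping in the parity identity exactly right — in particular the opposite-orientation convention for edges shared by a face and $C$, and the connectedness of $H$ needed for the Euler count — since the entire conclusion hinges on that single congruence; the existence of the face orientation and the Jordan-curve parity of $q$ are comparatively routine once the identity is secured.
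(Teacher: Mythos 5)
The paper does not actually prove Theorem~\ref{kast}; it cites it as classical and points to Lov\'asz--Plummer, Theorem 8.3.4, so there is no in-paper argument to compare against. Your proof is, in substance, exactly the classical one from that reference: construct a face-by-face ``Kasteleyn'' orientation by induction on the number of bounded faces, prove the parity identity relating $a(C)$ to the number $q$ of enclosed vertices via double counting and Euler's formula, and use niceness plus the Jordan curve theorem to force $q$ even. The argument is correct. Two small points worth tightening. First, your double-counting convention for edges of $C$ (``the interior face traverses $e$ opposite to the clockwise direction of $C$'') has the handedness backwards under the usual orientation of the plane --- the face inside $C$ lies on the same side of $e$ as the clockwise traversal of $C$, so each edge of $C$ contributes $[\,e\text{ agrees with }C\,]$ rather than $1-[\,e\text{ agrees with }C\,]$, giving $a(C)\equiv q+1\pmod 2$ instead of $a(C)\equiv |C|+q-1\pmod 2$. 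The two congruences differ by $|C|$, which is even for the cycles you care about, so your conclusion is unaffected; but since you flag this bookkeeping as the crux, you should fix the convention (or argue it away by noting the discrepancy is $|C|-2a(C)$). Second, the existence of a bounded face sharing an edge with the unbounded face deserves one line (e.g., the dual graph of a connected plane graph is connected, so some bounded face is dual-adjacent to $f_\infty$ across a non-bridge edge); also note that an interior edge that is a bridge borders a single face twice, once in each direction, so it still contributes exactly $1$ to $\sum_f a(f)$. Finally, in the disconnected case it is cleanest to observe that an even nice cycle together with alternate edges and the matching of $G-V(C)$ yields a perfect matching of $G$, so if some component is odd there are no even nice cycles and the criterion of Theorem~\ref{car} is vacuous.
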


Let $\overrightarrow{G}$ be bipartite with bipartition $V=U\cup W$ such that $|U| = |W|$ (if $\overrightarrow{G}$ has a perfect matching, this is the case). The weighted skew symmetric adjacency matrix of $\overrightarrow{G}$ takes the form
\begin{equation}\label{bloc}
    \Bo(z) = \begin{pmatrix} 
        0 & B_{\overrightarrow{G}}(z) \\ 
        -B_{\overrightarrow{G}}(z)^T & 0 
    \end{pmatrix} \,,
\end{equation}
for any labeling $V=\{u_1, \ldots, u_k, w_1,\ldots, w_l\}$ such that $u_i \in U$ and $w_j \in W$ for $1\leq i\leq k$, $1\leq j \leq l$.
$B_{\overrightarrow{G}}(x)$ is called the \textit{weighted biadjacency matrix} of $\overrightarrow{G}$.  
The following lemma is given as an exercise in \cite{G} (though there seems to be a sign missing therein). We include a proof. 
\begin{lem}\label{cuad}
    Let $B$ be a square matrix of size $n$. If
    \[
    A = \begin{pmatrix} 
        0 & B\\ 
        -B^T & 0 
    \end{pmatrix} \,,
\]
then $\pff(A) = (-1)^{n(n-1)/2} \det B$.  
\end{lem}
\begin{proof}
    Let $V=\{1,\ldots, 2n\}$, $U=\{1, \ldots, n\}$ and $W=\{n+1, \ldots, 2n\}$. Fix some partition $P = \{\{i_1, j_1\}, \ldots, \{i_n, j_n\}\}$. By remark \ref{independent}, we can assume that $j_i = n+i$ for each $1\leq i\leq n$, so
    \[
    \sigma_P = \begin{pmatrix}
                1 & 2 & 3 & 4 & \ldots & 2n-1 & 2n \\
                i_1 & n+1 & i_2 & n+2 & \ldots & i_n & 2n
                \end{pmatrix} 
    \]
    Note that $i_1, \ldots, i_n \leq n < n+1 = j_1$, so the indices $i_l$ amount to $n-1$ inversions with respect to $j_1$. Similarly, $n-2$ of the $i_l$ give $n-2$ inversions with $j_2$, and so on. This gives a total of $n(n-1)/2$ inversions. Thus,
    \[
        \operatorname{sgn}(\sigma_P) = (-1)^{n(n-1)/2}\operatorname{sgn} \begin{pmatrix}
                1 & 2 & \ldots & n-1 & n & n+1 & \ldots & 2n-1 & 2n\\
                i_1 & i_2 & \ldots & i_{n-1} & i_n & n+1 & \ldots & 2n-1 &2n
                \end{pmatrix}\,.
\]
It follows that $\operatorname{sgn}(\sigma_P) = (-1)^{n(n-1)/2}\operatorname{sgn}(\sigma_P ')$, where
\[
                \sigma_P ' := \begin{pmatrix}
                1 & 2 & \ldots & n-1 & n \\
                i_1 & i_2 & \ldots & i_{n-1} & i_n 
                \end{pmatrix}\,.
\]
Thus, $b_P = (-1)^{n(n-1)/2}\operatorname{sgn}(\sigma_P ')b_{i_1,n+1}\cdots b_{i_n,2n}$. Summing over all $P$ we obtain the result.
\end{proof}

%In particular, finding the pfaffian of an even bipartite graph amounts to computing the determinant of its biadjacency matrix. 

\section{Snake graphs and cluster algebras}

A \textit{tile} is a graph isomorphic to a square drawn in the plane. A \textit{snake graph} $\G$ is a plane graph obtained from a sequence of $d$ tiles $G_1,\ldots, G_d$ such that each successive tile is placed on top or to the right of the previous tile, as in figure \ref{sg}. That is, for $1\leq i\leq d-1$, tiles $G_i$ and $G_{i+1}$ share their right and left edge, respectively, or their top and bottom edge, respectively. The edges of $\G$ that lie on its unbounded face are called \textit{boundary edges}, and the rest of the edges are \textit{internal}. It is easily seen by induction that snake graphs are bipartite and always admit perfect matchings, and that a snake graph with $d$ tiles has $2d+2$ vertices and $3d+1$ edges.

Snake graphs were used in \cite{MSW} to provide manifestly positive expansions of the cluster variables in cluster algebras from surfaces. We briefly recall their construction. Fix a bordered surface $(\Sigma, \mathbb{M})$ and an initial seed $(\mathbf{x}_0, \mathbf{y}_0, B(T_0))$, where $T_0$ is an ideal triangulation and let $\mathcal{A}_\bullet(T_0)$ be the associated cluster algebra with principal coefficients at $0$. As discussed in the introduction, each tagged arc $\gamma$ in $\Sigma$ corresponds to a cluster variable $x_\gamma$, and if $\gamma \notin T_0$, then $x_\gamma$ is not part of the initial cluster $\textbf{x}_0$, so one must perform a series of mutations to find the expansion of $x_\gamma$ with respect to $\textbf{x}_0$. This can be avoided by virtue of the following.
\begin{teo}(\cite{MSW}, Theorem 4.9)\label{mus}
    Orient $\gamma \notin T_0$ so that it crosses the diagonals $\tau_{i_1}, \ldots, \tau_{i_d}$ of $T_0$ in order. Each $\tau_{i_j}$ is contained in exactly two triangles $\Delta_j$ and $\Delta_j'$. Let $G_j$ be the quadrilateral formed by the triangles $\Delta_j$ and $\Delta_j'$. Give each side $s$ of $G_j$ the weight $w(s) = x_k$ if it is the internal diagonal of $T_0$ associated to $x_k$, and give it the weight $w(s) =1$ if it is a boundary edge of $T_0$. There is a snake graph $\G_\gamma$ with tiles $G_1, \ldots, G_d$ such that
    \[
    x_\gamma = \frac{1}{\operatorname{cross}(\gamma, T_0)} \sum_{M \in \operatorname{Match}(\G_\gamma)}x(M)y(M)\,
    \]
    where $x(M)$ is the product of all weights of the edges occurring in the matching $M$, $y(M)$ is the specialized height monomial of $M$, and $\operatorname{cross}(\gamma, T_0) := x_{i_1}x_{i_2}\cdots x_{i_d}$.
\end{teo}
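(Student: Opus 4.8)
The plan is to reduce the theorem to a purely combinatorial identity about snake graphs and to prove that identity by induction on $d$, the number of arcs $\gamma$ crosses. Cluster variables are uniquely pinned down by two pieces of data: their values on the initial cluster, and the exchange (Ptolemy) relations produced by flipping triangulations. Hence it suffices to show that the candidate Laurent polynomial $X_\gamma := \frac{1}{\operatorname{cross}(\gamma)}\sum_{P\in\operatorname{Match}(\G_\gamma)} w(P)$ agrees with $x_\gamma$ on these. For an initial arc $\tau_i\in T$ there is no crossing, the associated snake graph is empty, and the convention gives $X_{\tau_i}=x_{\tau_i}$; this anchors the induction.

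For the base case $d=1$, the arc $\gamma$ crosses a single $\tau_{i_1}$ and $\G_\gamma$ is one tile, a $4$-cycle, with exactly two perfect matchings. The two monomials $w(P)$, each divided by $\operatorname{cross}(\gamma)=x_{i_1}$, are precisely the two terms of $\mu_{i_1}(x_{\tau_{i_1}})$, so $X_\gamma=x_\gamma$ by the mutation rule. For the inductive step I would use a skein/Ptolemy-type exchange relation on $(\Sigma,\mathbb{M})$: smoothing a crossing of $\gamma$ with a suitable initial arc expresses $x_\gamma$ through cluster variables of arcs that cross fewer than $d$ arcs of $T$. Those shorter arcs satisfy the formula by hypothesis, so what remains is to verify the combinatorial shadow of this relation --- an identity among the weighted matching sums $\sum_P w(P)$ of the snake graphs involved, after clearing the various $\operatorname{cross}$-denominators.

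The technical heart, and the step I expect to be the main obstacle, is this snake-graph multiplication (grafting) identity. One must describe precisely how $\G_\gamma$ is assembled from the snake graphs of the smoothed arcs, and then produce a weight-preserving bijection between the product of matching sets on one side and the disjoint union of products of matching sets on the other. The difficulty is entirely in the bookkeeping: the grafting depends on where the smoothing occurs and on the local arrangement of the triangles $\Delta_j,\Delta_j'$, and one must keep careful track of which shared edges are internal (weight $x_k$) versus boundary (weight $1$) so that the monomial weights and the $\operatorname{cross}$ monomials match term by term. A convenient way to organize this is to split $\operatorname{Match}(\G_\gamma)$ according to whether the internal edge corresponding to the last crossed arc $\tau_{i_d}$ is used by the matching; this dichotomy mirrors the two terms of the exchange relation.

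Finally, the generic induction does not see the degenerate configurations, so I would dispose of those by hand: arcs adjacent to self-folded triangles and arcs ending at punctures require the special tile-weight and triangle conventions of \cite{MSW}, and it is precisely here that the hypothesis of valency at least $3$ at each puncture is invoked. These cases are routine but unavoidable, and checking them completes the induction and hence the proof.
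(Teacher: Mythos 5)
First, a point of orientation: the paper does not prove this statement. Theorem \ref{mus} is quoted verbatim from \cite{MSW} (their Theorem 4.9) and used as a black box, so there is no internal proof to compare yours against; your proposal has to stand on its own. Judged that way, it has the right general shape --- induction on $d$, with the single-tile case matching the two terms of a mutation --- but the inductive step rests on a circularity. The relation you propose to drive it, the Ptolemy/skein identity $x_\gamma x_{\tau_{i_j}} = x_{\alpha_1}x_{\alpha_2} + x_{\beta_1}x_{\beta_2}$ obtained by smoothing a crossing of $\gamma$ with an initial arc, is not one of the defining relations of the cluster algebra: the defining relations are the mutations $\mu_k$, which exchange a variable inside a seed, whereas a skein relation involving an arbitrary non-initial arc $\gamma$ is itself a nontrivial theorem. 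In the literature these skein relations are \emph{derived from} the expansion formula you are trying to prove (Musiker--Williams), so invoking them as input requires an independent proof that your sketch does not supply. The standard non-circular route inducts instead on the number of flips separating $T$ from a triangulation containing $\gamma$, so that each step uses a genuine exchange relation, at the cost of tracking how the snake graph transforms under a flip of $T$ rather than under a smoothing of $\gamma$.

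Beyond that, the step you yourself flag as the ``technical heart'' --- the weight-preserving bijection between $\operatorname{Match}(\G_\gamma)$ and the union of products of matching sets of the smaller snake graphs, with all $\operatorname{cross}$-monomials and internal-versus-boundary weights accounted for --- is where essentially all the content of the theorem lives, and it is deferred rather than proved. (This is exactly the grafting/resolution machinery that \c{C}anak\c{c}\i\ and Schiffler later systematized as snake graph calculus.) Add to this the unhandled degenerate configurations you mention only in passing (self-folded triangles, arcs incident to punctures, and the fact that the curves $\alpha_i,\beta_i$ produced by smoothing may be contractible or homotopic to boundary arcs and hence carry no cluster variable), and the proposal is an outline of a known proof strategy rather than a proof.
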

For complete details on how the tiles $G_j$ are glued together to obtain $\G_\gamma$, see \cite{MSW}. There are exactly two matchings of $\G_\gamma$ consisting of boundary edges, $M_-$ and $M_+$. If tile $G_1$ is drawn in the plane in such a way that its orientation agrees with its orientation on $\Sigma$, $M_-$ is the matching that includes the south edge of $G$, and it is called the \emph{minimal matching} of $\G_\gamma$. We recall the definition of the specialized height monomial from \cite{MSW2}. 

\begin{defn}\label{height}
    Let $T_0 = \{\tau_1, \ldots, \tau_n\}$ be an ideal triangulation of $(\Sigma, \mathbb{M})$ and $\gamma\notin T_0$ a plain arc. By lemma 4.7 of \cite{MSW}, the symmetric difference $M \ominus M_{-}$ encloses a union of tiles $\cup_{j\in J} G_{i_j}$. The \emph{specialized height monomial} of $M$ is given by 
    \[
        y(M) = \prod_{j\in J} y_{\tau_{i_j}}\,.
    \]
\end{defn}
That is, $y(M)$ is the product of the variables $y_i$ that correspond to the diagonals of the tiles enclosed by $M\ominus M_{-}$. Note that several tiles of $\G_{\gamma}$ may have the same diagonal.  
By theorem 5.3 of \cite{MSW2}, the set of perfect matchings of $\G_\gamma$ is a distributive lattice $\operatorname{Match(\G_\gamma)}$. In this lattice, a matching $M$ covers another matching $M'$ when $M$ can be obtained from $M'$ via a local moved called a \emph{face twist}: that is, there is a tile $G_i$ of $\G_\gamma$ with edges $e_1,e_2,e_3,e_4$ such that $M\ominus M' = \{e_1,e_2,e_3,e_4\}$ and $M=(M'\setminus \{e_1, e_2\})\cup\{e_3,e_4\}$. The perfect matching lattice of $\G_\gamma$ can be built up from sequences of face twists beginning at the minimal matching $M_{-}$. Thus, if the matching $M$ is obtained by applying the sequence of face twists at tiles $G_{j_1},\ldots, G_{j_k}$, $y(M) = y_{\tau_{j_1}}\cdots y_{\tau_{j_k}}$.

We now describe an alternative weight scheme for the edges of $\G_{\gamma}$ that recovers the specialized height monomial of each matching. 
\begin{figure}\label{sg}
    \centering
    \includegraphics[scale=0.75]{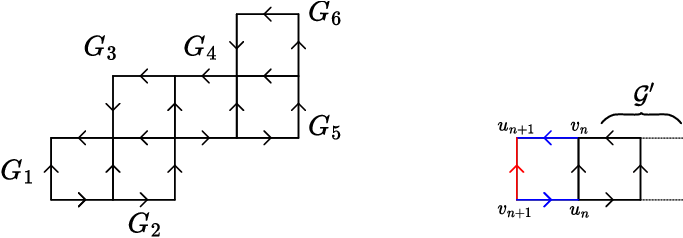}
    \caption{To the left, a snake graph $\G$ with the pfaffian orientation $\p(\G)$. To the right, the inductive step in the proof of Proposition \ref{mio}}
    \label{fig:enter-label}
\end{figure}

\begin{defn}(Principal weighting)
    Let $\G_\gamma$ be the snake graph on $n$ tiles associated to the arc $\gamma\notin T_0$ as in theorem \ref{mus}, and let $w$ be its corresponding weighting. Consider the minimal matching $M_{-}$. We define the weighting $\tilde{w}$ of the edges of $\G_\gamma$ as follows: for each $1\leq i\leq n$
    \begin{itemize}
        \item If it exists, pick a boundary edge of $e$ of $G_i$ such that $e\notin M_{-}$ and set $\tilde{w}(e) = y_{\tau_i}$.
        \item If all boundary edges of $G_i$ are in $M_{-}$, then the consecutive tiles $G_{i-1},G_i,G_{i+1}$ form a straight snake subgraph of $\G_\gamma$. Let $e,e'$ be boundary edges of $G_{i-1}$ and  $G_{i+1}$, respectively, and let $d,d'$ be the edges of $G_i$ that are not in $M_{-}$. Set $\tilde{w}(e')=\tilde{w}(d)=\tilde{w}(d')=\tilde{w}(e') =\sqrt{y_i}$.
        \item For all other edges $a$ of $\G_{\gamma}$, set $\tilde{w}(a) = 1$.
    \end{itemize}
    Let $w_\bullet$ be the weighting of $\G_\gamma$ defined by $w_\bullet(e) := w(e)\tilde{w}(e)$ for each edge $e$. We call $w_\bullet$ a \emph{principal weighting} of $\G_{\gamma}$.
    \end{defn}

\begin{prop}\label{principal}
    Let $\G_\gamma$ be the snake graph of the arc $\gamma \notin T_0$. For each $M\in \operatorname{Match}(\G_\gamma)$, let $w_\bullet(M) := \prod_{e\in M}w_\bullet(e)$. Then
    \[
        x_\gamma = \sum_{M\in \operatorname{Match}(\G_\gamma)} w_\bullet(M) \,.
    \]
\end{prop}
\begin{proof}
    By virtue of theorem \ref{mus}, we just need to show that $w_\bullet(M) = x(M)y(M)$. By definition, $x(M)$ divides $w_\bullet(M)$ and there is no power of $x_i$ that divides $w_\bullet(M)/x(M)$, for any $i$. Thus, it is enough to show that each variable $y_j$ appears with equal multiplicity in both $w_\bullet(M)$ and $y(M)$. Now, $y_j$ appears once in $y(M)$ for each tile $G_{i}$ with diagonal $\tau_j$ that is enclosed by the edges $M\ominus M_{-}$. If there is a boundary edge $e$ of $G_i$ in $M$, then $e\notin M_{-}$, so $M$ includes all boundary edges of $G_i$ (one if $G_i$ is a corner tile, two otherwise), and therefore $y_j \mid w_\bullet(M)$. If there is no boundary edge of $G_i$ in $M$, then both are in $M_{-}$, and $G_i$ is in such a position as in the third tile of figure \ref{ejem}. Let $e, e'$ be the edges in $G_i\cap M_{-}$, with $e$ adjacent to $a_1, a_2$ and $e'$ adjacent to $a_1', a_2'$. Since $G_i$ is enclosed by $M\ominus M_{-}$, $M$ must include one element $e_1$ of $\{a_1, a_2\}$ and one element $e_2$ of $\{a_1', a_2'\}$. In all cases, $w_\bullet(e_1)=\sqrt{y_j}=w_\bullet(e_2)$, so $y_j \mid w_{\bullet}(M)$. Reciprocally, any matching $M$ that includes the edge $a_1$ must also include either $a_1'$ or $a_2'$, so $M\ominus M_-$ encloses $G_i$ and $y_j$ appears in both $y(M)$ and $w_\bullet(M)$.
\end{proof}

Now, by Theorem \ref{kast}, snake graphs are pfaffian, and Theorem \ref{car} immediately suggests a pfaffian orientation:

\begin{defn}
    Let $\G$ be a snake graph with tiles $G_1, \ldots, G_d$. Define the orientation $\p(\G)$ as follows:
    \begin{enumerate}
        \item Orient the boundary edges of $\G$ cyclically and counterclockwise.
        \item Orient every internal vertical edge from bottom to top, and every internal horizontal edge from right to left. 
        \item Reverse the orientation of the left edge of $G_1$.
    \end{enumerate}
\end{defn}
The following is readily seen on account of Theorem \ref{car}, but we give a direct proof.
\begin{prop}\label{mio}
    For every snake graph $\G$, $\p(\G)$ is a pfaffian orientation.
\end{prop}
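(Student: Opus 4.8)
The plan is to invoke Theorem \ref{car}: since $\G$ is even and planar, it suffices to show that every even nice cycle of $\G$, traversed in some fixed direction, meets an odd number of arrows agreeing with $\p(\G)$. For an even cycle this parity is independent of the direction of traversal, so I may always traverse clockwise. Rather than analyze nice cycles one by one, I would first reduce to a purely local condition on the tiles, namely that every bounded face of $\G$ is \emph{clockwise odd}: traversing the $4$-cycle bounding each tile $G_i$ clockwise, an odd number of its edges agree with $\p(\G)$.

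The bridge from faces to cycles is the standard parity computation underlying Kasteleyn's theorem. For any cycle $C$ bounding a region $R$ and enclosing the tiles $F_1,\dots,F_r$, I would orient $C$ and each $F_\ell$ clockwise and count, modulo $2$, the edges agreeing with $\p(\G)$. Each edge of $R$ interior to $C$ lies on two of the $F_\ell$ and is traversed in opposite directions by them, so it contributes exactly once to $\sum_\ell \operatorname{agree}(F_\ell)$, while each edge of $C$ is traversed by its unique incident inner face in the same direction as by $C$. Hence
\[
\operatorname{agree}(C) \equiv \sum_{\ell=1}^r \operatorname{agree}(F_\ell) + e_{\mathrm{in}} \pmod 2,
\]
where $e_{\mathrm{in}}$ is the number of edges strictly inside $C$. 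If every face is clockwise odd the sum is $\equiv r$, and Euler's formula for the closed disk $R$ gives $r = 1 + e_{\mathrm{in}} - v_{\mathrm{in}}$, so $\operatorname{agree}(C) \equiv 1 + v_{\mathrm{in}} \pmod 2$, with $v_{\mathrm{in}}$ the number of vertices strictly inside $C$. Finally, if $C$ is nice then $\G - V(C)$ has a perfect matching; since no edge of the planar graph $\G$ joins a vertex inside $C$ to one outside, this matching restricts to a perfect matching of the interior vertices, forcing $v_{\mathrm{in}}$ to be even. Thus $\operatorname{agree}(C)$ is odd for every nice cycle, which is exactly what Theorem \ref{car} requires.

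It then remains to check the local condition, that every tile is clockwise odd under $\p(\G)$. Here I would use that the outer boundary oriented counterclockwise coincides, edge by edge, with the counterclockwise orientation of each tile's exposed sides, so every boundary edge opposes the clockwise traversal of its tile and contributes $0$. By the rules in $\p(\G)$, an internal edge agrees with the clockwise traversal of a tile exactly when it is that tile's bottom or left edge, and opposes it when it is the top or right edge. A short case analysis then finishes: an interior tile $G_i$ ($1<i<d$) has exactly two internal edges, the one shared with $G_{i-1}$ (its bottom or left edge, contributing $1$) and the one shared with $G_{i+1}$ (its top or right edge, contributing $0$); $G_d$ has a single internal edge, shared with $G_{d-1}$ (bottom or left, contributing $1$); and $G_1$ has a single internal edge shared with $G_2$ (top or right, contributing $0$), but step (3) reverses its left boundary edge, turning that edge into an agreement. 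In every case the count is $1$, hence odd.

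The main obstacle I anticipate is twofold: getting all the modulo-$2$ bookkeeping in the face-to-cycle bridge correct (in particular the orientation conventions that make interior edges cancel while boundary edges survive), and being careful with the boundary-edge directions so that the single reversal in step (3) is seen to be exactly what rescues $G_1$ --- without it $G_1$ would have zero clockwise agreements and the orientation would fail. A convenient shortcut specific to snake graphs, which sidesteps the general Euler argument, is to observe that the only cycles of $\G$ are the boundaries $C_{[i,j]}$ of intervals $G_i,\dots,G_j$ of consecutive tiles (the snake never folds back on itself), for which $e_{\mathrm{in}} = j-i$; then $\operatorname{agree}(C_{[i,j]}) \equiv (j-i+1) + (j-i) \equiv 1 \pmod 2$ directly. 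This matches the inductive picture suggested by Figure \ref{sg}, where attaching one tile adds one interior edge and one clockwise-odd face.
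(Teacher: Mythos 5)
Your proof is correct, but it takes a genuinely different route from the paper's. You carry out the argument that the paper only alludes to (``readily seen on account of Theorem \ref{car}''): reduce pfaffian-ness to the clockwise-oddness of the tile boundaries via the standard Kasteleyn face-to-cycle parity bridge, and then verify the local condition by a case analysis on where each tile's internal edges sit. Your bookkeeping checks out --- interior edges contribute once to the face sum, boundary edges of $C$ survive, Euler gives $\operatorname{agree}(C)\equiv 1+v_{\mathrm{in}}$, and niceness forces $v_{\mathrm{in}}$ even --- and your tile-by-tile count (interior tiles and $G_d$ get their single agreement from the edge shared with the previous tile, while $G_1$ is rescued precisely by the reversal in step (3)) is right; the snake-specific shortcut via the cycles $C_{[i,j]}$ is also valid and even shows the condition holds for \emph{all} even cycles, not just nice ones. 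The paper instead proceeds by induction on the number of tiles, peeling off the first tile and tracking the signs of the pfaffian terms directly through the permutation signs in Definition \ref{pfaff}. The trade-off is that the paper's computation yields more than pfaffian-ness: it shows every coefficient of $\operatorname{pff}(\p(\G))$ equals $(-1)^d$, i.e.\ $\operatorname{pff}(\p(\G))=(-1)^{d+1}\phi_{\G}$, and this explicit sign is exactly what feeds into the determinantal formula in the subsequent corollaries. Your argument establishes only that all coefficients share \emph{some} common sign, so to recover the paper's corollaries you would need to supplement it by evaluating the term of a single perfect matching (which is easy, but should be said). Conversely, your approach is more conceptual and generalizes immediately to any planar quadrangulation-like situation, whereas the paper's induction is tied to the linear structure of snake graphs.
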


\begin{proof}
    We proceed by induction in $d$, the number of tiles. When $d=1$, say $\G$ has vertices $u_1, w_1, u_2, w_2$ listed in cyclic order. The weighted biadjacency matrix of $\p(\G)$ is 
    \[
        \begin{pmatrix}
            z_{u_1 w_1} & z_{u_2 w_1} \\
            -z_{u_1 w_2} & z_{u_2 w_2}
        \end{pmatrix}
    \]
    so that $\pff(\p(\G)) = z_{u_1 w_1}z_{u_2 w_2} + z_{u_2 w_1}z_{u_1 w_2}$. 
    Take $\G$ with tiles $G_1, \ldots, G_{d+1}$, $d\geq 1$ and give it the orientation $\p(\G)$. Let $\G'$ be the snake graph with tiles $G_2, \ldots, G_{d+1}$ obtained by deleting the two leftmost vertices of $\G$. Observe that the induced orientation on $\G'$ is $\p(\G')$, so by induction the coefficients of the pfaffian of $\G'$ all have the same sign $\epsilon \in \{1,-1\}$. By circling the boundary of $\G$ counterclockwise, we obtain a bipartition $\{u_1, \ldots, u_n\} \cup \{w_1, \ldots, w_n\}$ of its vertices. Relabel $V(\G)$ so that $G_1$ has vertices $w_{n+1}, u_n, w_n, u_{n+1}$ as in figure \ref{sg}, and write $a_{i,j}$ and $a_{i,j}'$ for the entries of $A_{\p(\G)}(z)$ and $A_{\p(\G')}(z)$, respectively. Note that $A_{\p(\G)}(x)$ is obtained from $A_{\p(\G')}(z)$ by adding a row and a column  corresponding to vertices $u_{n+1}$ and $w_{n+1}$.
     
    Any perfect matching $M$ of $\G$ either contains the edge $u_{n+1}w_{n+1}$ or the edges $u_{n+1}w_{n}$ and $u_n w_{n+1}$. 
    In the first case, $M$ restricts to a perfect matching of $\G'$, and its associated term in $\pff(\p(\G))$ is 
    \begin{align*}
            a_M &=\sgn \begin{pmatrix}
            1 & 2 & \ldots & 2n-1 & 2n & 2n+1 & 2n+2 \\
            i_1 & j_1 & \ldots &i_n & j_n & 2n+1 & 2n+2
        \end{pmatrix} a_{i_1, j_1} \ldots a_{i_n, j_n} (-z_{u_{n+1} w_{n+1}}) \\
        &= \sgn \begin{pmatrix}
            1 & 2 & \ldots & 2n-1 & 2n   \\
            i_1 & j_1 & \ldots &i_n & j_n  
        \end{pmatrix} a_{i_1, j_1} \ldots a_{i_n, j_n} (-z_{u_{n+1} w_{n+1}})\\
        &= -\epsilon z_{u_{i_1}w_{j_1}}\cdots z_{u_{i_n}w_{j_n}}z_{u_{n+1} w_{n+1}}
    \end{align*}
    In the second case, $M$ contains edges $u_{n+1}w_{n}$ and $u_n w_{n+1}$. Such matchings are in bijective correspondence to matchings $M'$ of $\G'$ containing $u_n w_n$. The associated term in $\pff(\p(\G))$ is
    \begin{align*}
         a_M &=\sgn \begin{pmatrix}
            1 & 2 & \ldots & 2n-1 & 2n & 2n+1 & 2n+2 \\
            i_1 & j_1 & \ldots &2n-1 & 2n+2 & 2n+1 & 2n
        \end{pmatrix} a_{i_1, j_1} \ldots a_{i_{n-1}j_{n-1}} (-z_{u_{n} w_{n+1}})(-z_{u_{n+1} w_n})\,.
    \end{align*}
    The corresponding term $a_{M'}$ is
    \[
         \sgn \begin{pmatrix}
            1 & 2 & \ldots & 2n-1 & 2n  \\
            i_1 & j_1 & \ldots &2n-1 & 2n
        \end{pmatrix} a_{i_1, j_1} \ldots a_{i_{n-1}j_{n-1}}z_{u_n, w_n}.
    \]
    Note $a_M$ can be obtained from $a_{M'}$ by substituting $z_{u_n, w_n}$ with $-z_{u_{n} w_{n+1}}z_{u_{n+1} w_n}$, for their associated permutations have opposite signs. Thus, $a_M = -\epsilon z_{u_{i_1}, w_{j_1}}\cdots z_{u_{i_{n-1}},w_{j_{n-1}}}z_{u_{n} w_{n+1}}z_{u_{n+1} w_n}$, so every term of $\pff(\p(\G))$ has coefficient $-\epsilon$.
\end{proof}
\begin{figure}\label{pol}
    \centering
    \includegraphics[scale=0.8]{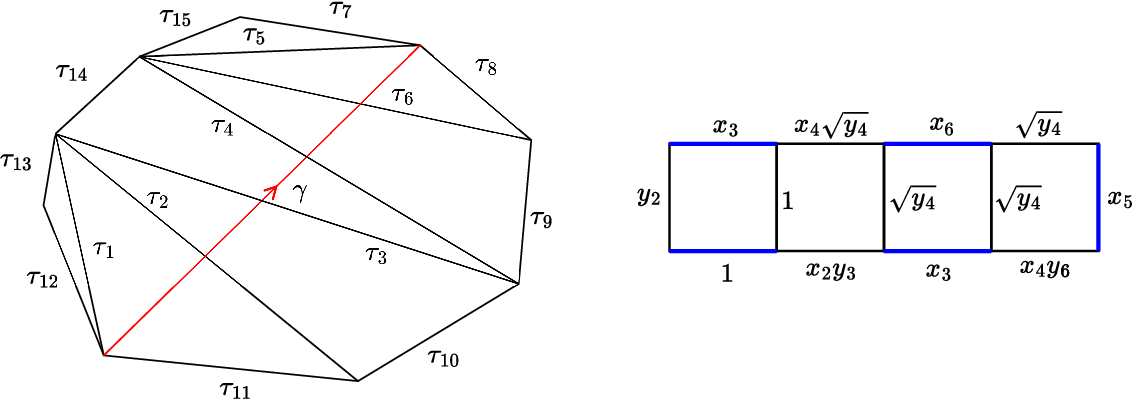}
    \caption{Example in type $A$. To the right, a principal weighting of $\G_\gamma$ with $M_{-}$ depicted in blue.}
    \label{ejem}
\end{figure}
% The previous proof also shows that the sign of each coefficient in $\pff(\p(\G))$ is $(-1)^d$, where $d$ is the number of tiles in $d$. Observe that the snake graph $T$ with one tile has pfaffian equal to $-\phi_T$. By induction, we see that $\pff(\p(\G)) = (-1)^{d+1}\phi_G$.
\begin{remark}\label{pos}
    As in the previous proof, one can label the vertices cyclically by circling the boundary of $\G$ in the clockwise direction, starting from the leftmost bottom vertex $v_1$. Call this the \textit{cyclical labeling} of $\G$. With this labeling, the perfect matching of $\p(\G)$ given by the boundary edges $v_1 v_2, v_3 v_4, \ldots, v_{2n-1} v_{2n}$ has positive sign in $\pff(\G)$, as it is associated to the identity permutation. Thus, every coefficient of $\pff(\p(\G))$ is positive with respect to this labeling.
\end{remark}

\begin{remark}\label{pos2}
    The adjacency matrix of $\G$ with respect to the cyclic labeling is not of the form \ref{bloc}. To fix this, we relabel the vertices of $\G$ via the permutation $\sigma$ given by $\sigma(i) = (i+1)/2$ for odd $i$ and $\sigma(j) = i/2+n$ for even $j$, which maps the odd numbered vertices in the cyclic labeling of $\G$ onto the set $\{1,\ldots, n\}$, and the even numbered vertices onto the set $\{n+1, \ldots, 2n\}$. Moreover, $\sgn(\sigma) = (-1)^{n(n-1)/2}$, as can be seen by counting the negative slope segments with endpoints in $\{(i, \sigma(i)) \in \R^2 \mid i\in \{1,\ldots, 2n\}\}$.
\end{remark}

\begin{defn}
Let $\G$ be a snake graph. Let $B_{\p(\G)}(z)$ denote the weighted biadjacency matrix of $\G$ obtained from the relabeling in remark \ref{pos2}. That is, when walking the boundary cycle of $\G$ in the counterclockwise sense starting from the southwest vertex of the first tile of $\G$, we get the sequence of labels $1,
n/2+1, 2, n/2+2, \ldots, n/2, n$. 
\end{defn}
 
Note that all non zero entries of $B_{\p(\G)}(z)$ below the  have negative sign below the diagonal, and positive sign above it. In view of lemma \ref{cuad} and remarks \ref{pos} and \ref{pos2}, we have the following:

\begin{cor}\label{roc}
    Let $\G$ be a snake graph. Then $\det B_{\p(\G)}(z) = \phi_{\G}$
\end{cor}

\begin{remark}\label{otras}
    There is a shorter proof of proposition \ref{mio} that uses the lattice structure of $\operatorname{Match}(\G)$. To wit, let the matching $M$ be covered by $M'$, and write $\sigma_M$, $\sigma_{M'}$ for their associated permutations, with respect to some labeling of $V(\G)$. If $M'$ is obtained by twisting the face $f$ with the vertices $u_i, w_j, u_{k}, w_{l}$, then $\sigma'=(i \  l)\sigma$, but since $f$ is oddly oriented by theorem \ref{car}, $b_{i,j}b_{l,k} = - b_{i,l}b_{j,k} $, so the terms corresponding to $M$ and $M'$ have the same sign in $\pff \G$. Thus, we may more generally take any pfaffian orientation of $\G$ and label its vertices in such a way that for some given matching $M$ (say, the minimal matching $M_{-}$), the permutation $\sigma_M$ is the identity.
\end{remark}
% \begin{proof}
%     Let $\G$ have $n-1$ tiles so that it has $2n$ vertices. Let $B$ be the weighted adjacency matrix of $\p(\G)$ given by the cyclic labeling of $\G$. By the previous remarks and Lemma \ref{cuad}, $\phi_{\G} = \sgn(\pi)\pff(B) = \sgn(\pi)(-1)^{n(n-1)/2}\det M(z) = \det M(z)$. 
% \end{proof}
% \begin{proof}
%         $\G$ has $2d+2$ vertices, so $M(z)$ has size $d+1$. By Lemma \ref{cuad},
%     \[
%         \det M(z) = (-1)^{d(d+1)/2} \operatorname{pff}(\p(\G)) = (-1)^{d(d+1)/2+d+1} \phi_{\G} = (-1)^{(d+1)(d+2)/2}\phi_{\G}\,. 
%     \]
% \end{proof}
With remark \ref{otras} in mind, we give the following definition. 
\begin{defn}
    Let $\G_\gamma$ be the snake graph associated to the plain arc $\gamma \notin T_0$. Fix any pfaffian orientation of $\G_\gamma$ and label its vertex set so that $\sigma_{M_{-}} = \operatorname{Id}$. Let $B(z)$ be the corresponding weighted biadjacency matrix of $\G_\gamma$. Denote by $B(w_\bullet)$ the matrix obtained by substituting the weight $w_\bullet(e)$ in each entry of $B(z)$. We will say $B(w_\bullet)$ is a \emph{principal} biadjacency matrix of $\G_\gamma$.
\end{defn}

We can now compactly state our determinantal formula, which follows from corollary \ref{roc}.

\begin{prop}\label{mio2}
    Let $\mathcal{A}(\Sigma, \mathbb{M})$ be a surface cluster algebra with principal coefficients at the initial seed $(T_0, \mathbf{x}_0, \mathbf{y}_0)$, where $T_0$ is an ideal triangulation of $\Sigma$. For every plain arc $\gamma \notin T_0$, let $\G_\gamma$ be its associated snake graph. Let $B(w_\bullet)$ be a principal biadjacency matrix of $\G_\gamma$. Then
    \[
    x_\gamma = \frac{\det B(w_\bullet)}{\operatorname{cross}(\gamma, T_0)} \,.
    \]
\end{prop}
\begin{ex}
Consider figure \ref{ejem}. Take the pfaffian orientation $\p(\G_\gamma)$ and the depicted principal weighting. The corresponding principal biadjacency matrix is
\[
B = \begin{pmatrix}
    1 & 0 & 0 &0& y_2 \\
    -x_2y_3 & x_3 & 0 & \sqrt{y_4} & 0\\
    0 & -x_4y_6 & x_5 & 0 & 0 \\
    0 & -\sqrt{y_4} & -\sqrt{y_4} & x_6 & 0 \\
    -1 & 0 & 0& -x_4 \sqrt{y_4} & x_3
\end{pmatrix}\, ,
\]
and $\det(B) = x_3x_5x_6y_2 + x_2 x_4^2 y_2 y_3 y_4 y_6 + x_2 x_4 x_5 y_2 y_3 y_4 + x_4 y_2 y_4 y_6 + x_5 y_2 y_4 + x_3^2x_5 x_6 + x_3 x_4 y_4 y_6 + x_3 x_5 y_4\,.$
\end{ex}

\section*{Acknowledegments}
The author received support from Daniel Labardini Fragoso's grant UNAM-PAPIIT-IN109824. This work was presented at the first meeting of the project ``Cluster algebras and related topics" financed by bando Galileo 2024- G24-215 and held at the department S.B.A.I. of Sapienza-Università di Roma in June 2024. The author thanks the participants, in particular Giovanni Cerulli Irelli, Pierre-Guy Plamondon and Ralf Schiffler for valuable comments, as well as Daniel Labardini-Fragoso for his kind support and encouragement. The author is grateful to Giovanni Cerulli Irelli and Salvatore Stella for their hospitality, and to Jon Wilson for his kind comments on a previous draft.

\end{document}